\documentclass[12pt,reqno]{article}

\usepackage[usenames]{color}
\usepackage{amssymb}
\usepackage{mathtools}
\usepackage{graphicx}
\usepackage{epsfig}
\usepackage[colorlinks=true,
linkcolor=webgreen,
filecolor=webbrown,
citecolor=webgreen]{hyperref}

\definecolor{webgreen}{rgb}{0,.5,0}
\definecolor{webbrown}{rgb}{.6,0,0}

\usepackage{amsmath}
\usepackage{amsthm}
\usepackage{amsfonts}

\DeclareMathOperator{\erfi}{Erfi}

\setlength{\textwidth}{6.5in}
\setlength{\oddsidemargin}{.1in}
\setlength{\evensidemargin}{.1in}
\setlength{\topmargin}{-.1in}
\setlength{\textheight}{8.4in}

\newcommand{\seqnum}[1]{\href{http://oeis.org/#1}{\underline{#1}}}

\begin{document}

\theoremstyle{plain}
\newtheorem{theorem}{Theorem}
\newtheorem{corollary}[theorem]{Corollary}
\newtheorem{lemma}[theorem]{Lemma}
\newtheorem{proposition}[theorem]{Proposition}
\newtheorem{result}[theorem]{Result}

\theoremstyle{definition}
\newtheorem{definition}[theorem]{Definition}
\newtheorem{example}[theorem]{Example}
\newtheorem{conjecture}[theorem]{Conjecture}

\theoremstyle{remark}
\newtheorem{remark}[theorem]{Remark}

\begin{center}
\vskip 1cm{\LARGE\bf 
Generating Functions for Domino Matchings in the $2\times k$ Game of Memory\\
}
\vskip 1cm 
\large
Donovan Young\\
St Albans, Hertfordshire \\
AL1 4SZ \\
United Kingdom\\
\href{mailto:donovan.m.young@gmail.com}{\tt donovan.m.young@gmail.com} \\
\end{center}

\vskip .2 in

\begin{abstract}
When all the elements of the multiset $\{1,1,2,2,3,3,\ldots,k,k\}$ are
placed in the cells of a $2\times k$ rectangular array, in how many
configurations are exactly $v$ of the pairs directly over top one
another, and exactly $h$ directly beside one another --- thus forming
$1\times 2$ or $2\times 1$ dominoes? We consider the sum of matching numbers over the
graphs obtained by deleting $h$ horizontal and $v$ vertical vertex
pairs from the $2\times k$ grid graph in all possible ways, providing
a generating function for these aggregate matching polynomials. We use
this result to derive a formal generating function enumerating the
domino matchings, making connections with linear chord diagrams.
\end{abstract}

\section{Introduction}

The game of memory consists of the placement of a set of distinct
pairs of cards in a rectangular array. The present author \cite{DY}
considered the enumeration of the configurations in which exactly $p$
of the pairs are placed directly beside, or over top of one another,
thus forming $1\times 2$ or $2\times 1$ dominoes. In this paper we
consider the case of $2\times k$ arrays in more detail. In Figure
\ref{fourways} we show a configuration of the case $k=4$ with $h=1$
horizontal dominoes, and $v=1$ vertical dominoes.
\begin{figure}[h]
\begin{center}
\includegraphics[bb=0 0 516 92, height=0.9in]{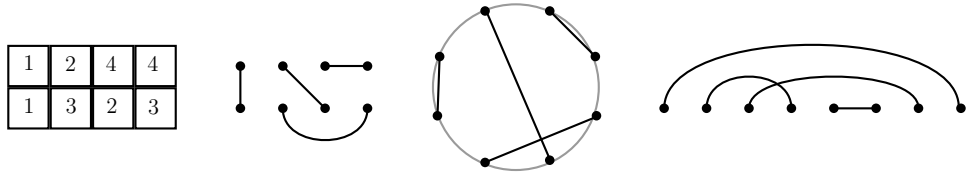}
\end{center}
\caption{A configuration for the $2\times 4$ array with one horizontal
  and one vertical domino is shown in four different
  representations. From left to right: a placement of paired cards in
  a game of memory, a Brauer diagram whose links correspond to the
  pairs, a chord diagram, and finally as a linear chord diagram
  resulting from breaking the circle in the chord diagram at its
  Westernmost point.}
\label{fourways}
\end{figure}
The enumeration of these configurations always carries a factor of
$k!$, which counts the orderings of the $k$ distinguishable pairs. It
is therefore easier to drop this factor, and thus treat the pairs as
indistinguishable. We can then think of the domino enumeration problem
in different ways: as a Brauer diagram\footnote{Terada \cite{IT} and
  Marsh and Martin \cite{MM} have considered Brauer diagrams in the
  context of combinatorics.}, as a chord diagram (cf.\ Krasko and
Omelchenko \cite{KO}), or unfolded as a linear chord diagram
(cf.\ Cameron and Killpatrick \cite{CK}). In the present paper we
provide a generating function for the numbers $D_{k,v,h}$ which count,
considering the pairs to be indistinguishable, the number of
configurations with $h$ horizontal, and $v$ vertical dominoes. It is
clear that $D_{k,v,h}=0$ for $h+v>k$; considering the $D_{k,v,h}$ as
matrices with $v\geq 0$ indexing rows, and $h\geq 0$ indexing columns,
the entries below the anti-diagonal are therefore all zero. The first few values
are as follows:
\begin{equation}\nonumber
  D_{0,v,h} = 1,\quad
  D_{1,v,h} = \begin{pmatrix}
    0& 0\\
    1& 
  \end{pmatrix},\quad
  D_{2,v,h} = \begin{pmatrix}
    1& 0& 1\\
    0& 0\\
    1&
  \end{pmatrix},\quad
   D_{3,v,h} = \begin{pmatrix}
    2& 4& 2& 0\\
    4& 0& 2&\\
    0& 0\\
    1&
  \end{pmatrix},
\end{equation}
where we have omitted the aforementioned zero entries. The sum of the
numbers on the anti-diagonal are the Fibonacci numbers, which count
the domino tilings of the $2\times k$ array\footnote{Graham, Knuth,
  and Patashnik \cite[p. 320]{GKP} give an account.}. The sum of all
numbers in the matrix\footnote{We recall that the double factorial is
  given by $n!! \coloneqq \prod_{k=0}^{\lceil{n\over2}\rceil-1} (n-2k)$, and
  we define $0!!=(-1)!!=1$.} is $(2k-1)!!$, which is the number of
ways of placing the cards, modulo re-labelling of the pairs.

The strategy we will employ is to consider the matching numbers of the
$2\times k$ grid graph, whose vertices represent the $2\times k$ array
of cards, and whose edges define the possible domino matchings. In
Figure~\ref{explaingrid} we show this grid graph for the case
$k=4$. The present author \cite[Section 3, Theorem 4]{DY} provided a
method for computing the number of $0$-domino configurations (i.e.,
configurations without any matched pairs) on any analogous graph
$G$. Let $\rho_j$ be the number of $j$-edge matchings\footnote{A
  $j$-edge matching is defined as a set of $j$ pairwise non-adjacent
  edges, none of which are loops.} on $G$. Then the number of
$0$-domino configurations is given by
\begin{equation}\nonumber
\sum_{j=0}^n (-1)^j(2n-2j-1)!! \,\rho_j,
\end{equation}
where $n$ is the number of pairs. We may therefore compute the
$D_{k,v,h}$ by computing the matching numbers for the graphs which
arise from removing $v$ vertical, and $h$ horizontal vertex pairs (and
their incident edges), from the $2\times k$ grid graph in all possible
ways.

\section{Preliminaries}

The {\it board} associated with the grid graph is defined as follows
(cf.\ Riordan \cite[p.\ 163]{JR1}). In Figure~\ref{explaingrid}, we
show the board for the case $k=4$. We color the vertices of the grid
graph black and white, in a checkerboard pattern. The columns (rows)
of the board represent the black (white) vertices. The cells of the
board represent the edges of the graph. The vertical edges correspond
to the cells on the central diagonal, while the horizontal edges
correspond to the upper and lower diagonals. The rook or matching
polynomial $g(x) = \sum g_j x^j$ encodes the number $g_j$ of $j$-edge
matchings on the graph and enjoys two important properties. The first
comes from partioning the set of all matchings into two sets: those
which contain a specific edge, and those which do not. One can then
{\it develop} the associated board using the property that the rook
polynomial of a board $B$ is equal to that of $B$ with a given cell
removed (corresponding to the absence of the specific edge in the
matching), plus $x$ times the rook polynomial of $B$ with the entire
row and column containing that cell removed (corresponding to the
presence of the specific edge in the matching). An example is shown in
Figure~\ref{develop}. The second property stems from graphs consisting
of disconnected components; the disconnectedness implies that the
number of matchings of each component are independent of one
another. We then have that if a board can be separated into regions
whose cells share no common row or column with another region (as in
the right hand side of Figure~\ref{develop}), the rook polynomial
factorizes into a product of the polynomials for the regions.

\begin{figure}[t]
\begin{center}
\includegraphics[bb=0 0 257 93, height=1.25in]{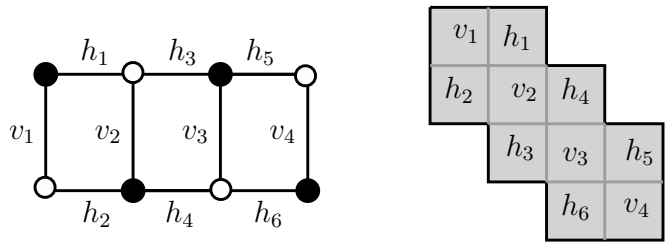}
\caption{The $2\times 4$ grid graph is shown on the left, while the
  corresponding board is shown on the right. The mapping of the edges
  of the graph to the cells of the board is also displayed.}
\label{explaingrid}
\end{center}
\end{figure}

\begin{figure}[t]
  \begin{center}
    \includegraphics[bb=0 0 288 71, height=1.in]{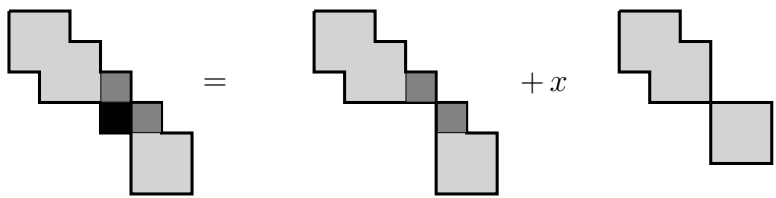}
\caption{The board on the left is developed using the black cell. The
  cells in the row and column containing the black cell are shown in
  dark grey. The boards on the right hand side both factorize into the
  product of two rook polynomials.}
\label{develop}
\end{center}
\end{figure}
\begin{figure}[h]
\begin{center}
\includegraphics[bb=0 0 293 112, height=1.5in]{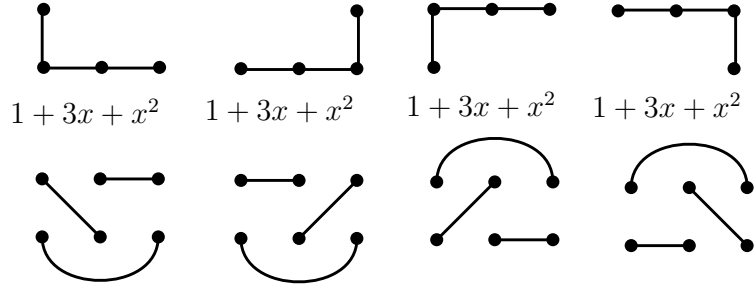}
\caption{On the bottom row, the four configurations counted by
  $D_{3,0,1}$ are shown as Brauer diagrams. On the top row, the set of
  graphs ${\cal G}_{3,0,1}$ corresponding to the removal of the vertex
  pair corresponding to the domino, along with their matching
  polynomials, are shown. The sum of these polynomials yields ${\cal
    T}_{3,0,1}(x) = 4(1+3x+x^2)$.}
\label{exampleTD}
\end{center}
\end{figure}

Riordan \cite[p.\ 230]{JR,JR1} (McQuistan and Lichtman \cite{ML} give
connections to dimer models in Physics) provided the generating
function for the rook polynomials of the $2\times k$ grids
\begin{equation}\label{riordan}\nonumber
T(x,y) \coloneqq \frac{1-xy}{1-y-2xy-xy^2+x^3y^3} = \sum_{k=0}^\infty T_k(x) y^k ,
\end{equation}
where $T_k(x)$ are the rook polynomials. Riordan also provided similar
generating functions for several related grids, whose boards are shown
in Figure~\ref{shapes},
\begin{align}\nonumber
  &s(x,y) \coloneqq \frac{T(x,y)}{(1-xy)^2},\quad r(x,y) \coloneqq (1-xy)\, s(x,y),
  \quad R(x,y) \coloneqq y\,r(x,y),\\\nonumber
  &S(x,y) \coloneqq \left(1-2xy-xy^2+x^3y^3\right)s(x,y).
\end{align}
For example, the rook polynomial corresponding to the board on the
left hand side of Figure~\ref{develop} is $R_3(x)\, r_3(x) +
x\,T_3(x)\, T_2(x)$.

\section{Generating functions for matching numbers}

We now turn our attention to the set ${\cal G}_{k,v,h}$ of graphs
which arise when $h$ horizontal, and $v$ vertical vertex pairs are
removed, in all possible ways, from the $2\times k$ grid graph. The
four graphs belonging to ${\cal G}_{3,0,1}$ are displayed in
Figure~\ref{exampleTD}. Each of the graphs $g \in {\cal G}_{k,v,h}$
will have an associated matching polynomial, which with a slight abuse
of notation, we will denote $g(x)$. The combinatorial object of
interest will be the sum of these polynomials
\begin{equation}\nonumber
  {\cal T}_{k,v,h}(x) \coloneqq \sum_{g\in {\cal G}_{k,v,h}} g(x).
\end{equation}
\begin{figure}[t]
\begin{center}
\includegraphics[bb=0 0 481 174, height=2.in]{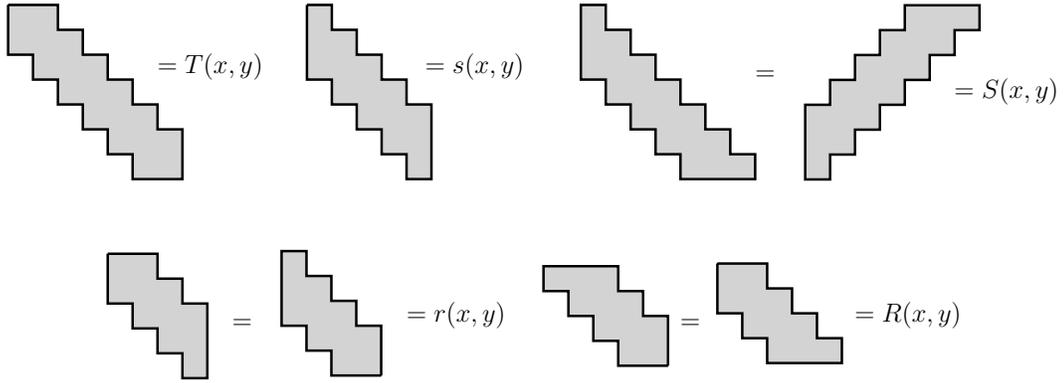}
\end{center}
\caption{The boards which arise when the calculus of the rook
  polynomial is applied to the boards shown in Figure~\ref{gridboard}.}
\label{shapes}
\end{figure}
\begin{theorem}
The generating function for the ${\cal T}_{k,v,h}(x)$ is given
by\footnote{\label{ftp}The power of $y$ is taken to be $k-v-h$; this is a useful
  parameterization for computing the generating function for the
  $D_{k,v,h}$.}
\begin{align}\nonumber\label{ThmFirst}
&{\cal T}(x,y,w,z)\coloneqq \sum_{k,h,v\geq 0} {\cal T}_{k,v,h}(x) \,y^{k-v-h} w^v z^h\\
  &=\frac{1 - xy - z}{1-(1+2 x) y-z - w (1 - xy - z)+(x y+z)
    (x^2 y^2 - (1 - z) z - y (1 - 2 x z))},
\end{align}
where the power of the variable $w$ (respectively $z$) corresponds to
the number of removed vertical (respectively horizontal) vertex pairs,
while the power of the variable $y$ corresponds to the number of
remaining vertex pairs after these removals have taken place. The
power $j$ of the variable $x$ corresponds to $j$-edge matchings in the
resulting graphs.
\end{theorem}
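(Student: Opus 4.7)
The plan is to reduce the theorem to a simpler identity by exploiting a segment decomposition, and then derive the latter by extending Riordan's rook-polynomial derivation of $T(x,y)$.

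\textbf{Reduction to horizontal removals.} When a vertical pair is removed at column $j$, both vertices and every incident edge disappear, including the two horizontal edges bridging column $j-1$ to $j$ and $j$ to $j+1$. Consequently, the columns strictly to the left of $j$ are disconnected from those strictly to the right in the resulting graph, and no horizontal removal can place a vertex in a vertically-removed column. Every element of ${\cal G}_{k,v,h}$ therefore decomposes canonically into an ordered sequence of $v+1$ (possibly empty) \emph{segments}, each being a $2\times m$ grid with only horizontal removals. Because the matching polynomial of a disjoint union is the product of the component polynomials and weights multiply, setting
\begin{equation} \nonumber
U(x,y,z) \coloneqq \sum_{m, h \geq 0} {\cal T}_{m,0,h}(x)\, y^{m-h} z^h
\end{equation}
and summing over the number $v$ of vertical removals yields the sequence identity
\begin{equation} \nonumber
{\cal T}(x,y,w,z) \;=\; \sum_{v \geq 0} w^v\, U(x,y,z)^{v+1} \;=\; \frac{U(x,y,z)}{1 - w\, U(x,y,z)}.
\end{equation}
A direct rearrangement shows that the denominator in the claimed formula has the form $D_0 - w(1-xy-z)$ with $D_0 = 1 - (1+2x)y - z + (xy+z)\bigl( x^2 y^2 - (1-z)z - y(1-2xz)\bigr)$, so the theorem reduces to proving $U(x,y,z) = (1-xy-z)/D_0$.

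\textbf{Deriving $U$.} I would obtain $U$ by extending Riordan's derivation of the plain rook-polynomial generating function, which produces $T(x,y) = U(x,y,0)$ from a closed system involving the auxiliary boards encoded by $s, r, R, S$. The extension admits, at each column, an additional transition weighted by $z$ representing the initiation of a horizontal removal, together with a small amount of cross-boundary state that records which (top, bottom, both, or neither) of the two vertices of the following column has been pre-removed. This enlarges the Riordan linear system by a finite, bounded number of new generating functions in $x,y,z$; solving the enlarged system and isolating the ``no pending removal'' component gives the claimed rational expression for $U$.

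\textbf{Main obstacle.} The substantive work lies in the bookkeeping for the extended system: a horizontal removal at columns $j, j+1$ does not act on a single board cell but simultaneously deletes, in the affected row, the two vertical-edge cells at columns $j, j+1$ and one of the adjacent horizontal-edge cells. Coordinating these deletions with the rook-polynomial calculus of Figure~\ref{develop} and checking that the recurrence closes on a finite set of generating functions is the heart of the argument; once the system is correctly assembled, the derivation of $U$, and hence of ${\cal T}$, is routine algebra.
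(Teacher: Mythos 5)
Your first step is correct and is a nice way to organize the argument: the $v$ removed vertical columns split the grid into $v+1$ ordered segments, horizontal removals cannot straddle a removed vertical column, and matching polynomials multiply over disjoint unions, so ${\cal T}=\sum_{v\ge 0}w^v U^{v+1}=U/(1-wU)$ with $U={\cal T}|_{w=0}$. This is consistent with the stated formula (whose denominator is indeed $D_0-w(1-xy-z)$ with numerator $1-xy-z$), and it parallels the last step of the paper's own proof, where the sum $\sum_j (w+z^2)^j{\cal X}^{j+1}$ plays the same role (the paper folds coincident horizontal removals into that step as well, since a coincident pair of horizontal removals severs the board exactly as two adjacent vertical removals do).

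The problem is that everything after the reduction is a plan, not a proof. The entire content of the theorem lives in the identity $U(x,y,z)=(1-xy-z)/D_0$, and you do not establish it: you do not write down the enlarged transfer system, do not exhibit its states or verify that it closes on finitely many generating functions, and do not solve it, explicitly deferring this as the ``main obstacle.'' The paper's proof consists precisely of carrying out this computation, by developing the boards with Riordan's auxiliary generating functions $T,s,r,R,S$: the non-coincident horizontal removals contribute $T+2yzr^2/(1-z(S+ys))$, each further deletion of the row and column of a marked cell (i.e., each time the matching uses the edge coincident with a removed horizontal pair) contributes a factor $2xyz$ and splits the board, giving ${\cal X}$ as a geometric sum, and coincident horizontal pairs contribute $z^2$ alongside $w$ in the final sum. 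Until you actually assemble and solve your extended system (and check, for instance, that the $\tilde h=1$ and $\tilde h=2$ cases reproduce $2zyr^2$ and $2z^2yr^2(S+ys)$, plus the $2xyz$ splitting terms), the proposal establishes only that the claimed formula has the right dependence on $w$, not that it is correct.
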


\begin{proof}
   The removal of a horizontal vertex pair from the grid graph
   corresponds to the deletion of a cell on the lower or upper
   diagonal of the board, together with its entire row and column. In
   Figure~\ref{gridboard}, boards resulting from the deletion of two
   horizontal vertex pairs are shown.  When the boards are developed
   using the black cells in Figure~\ref{gridboard}, various shapes
   arise; these are shown in Figure~\ref{shapes}. For example, the
   configuration shown on the left in Figure~\ref{gridboard}
   corresponds to
   \begin{align}\nonumber\label{example}
     &R(x,y)\cdot S(x,y) \cdot r(x,y) + xy\, T(x,y)\cdot R(x,y) \cdot r(x,y)\\
     &+ R(x,y) \cdot r(x,y)\cdot xy\, T(x,y)\nonumber
     + x^2y^2\, T(x,y) \cdot T(x,y) \cdot T(x,y)\\
     &= yr^2S + 2xy^2 Tr^2 + x^2y^2 T^3,
   \end{align}   
   where the $\cdot$ denotes ordinary multiplication, and has been
   included to aid in the following explanation. The first term
   corresponds to the removal of both the black cells, the second
   (third) term to the additional removal of the row and column
   containing the first (second) black cell, and the last term to the
   removal of the rows and columns containing both black cells. The
   multiplication also accounts\footnote{The reader is referred to
     Flajolet and Sedgewick \cite[p.\ 16]{FS} for an account of the
     basic symbolic method.}  for the ordered sum over all possible
   positions of the removed horizontal vertex
   pairs\footnote{\label{fn}The exception is when the two removed
     horizontal vertex pairs are directly over top of one another in
     the grid graph; these coincident configurations will be accounted
     for below.}. When the row and column containing a black cell is
   removed, the overall board is shortened, and hence earns a factor
   of $y$; this is why the expression $xy$ appears for each such
   removal. It is straightforward to see that the corresponding
   expression for the board shown on the right in
   Figure~\ref{gridboard} differs from the above only in the first
   term, which becomes $R(x,y) \cdot s(x,y) \cdot R(x,y) = y^2r^2s$.
\begin{figure}[t]
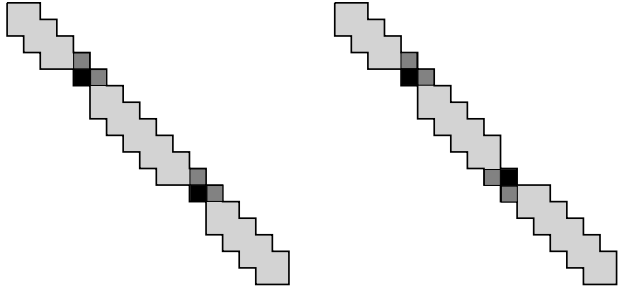

\begin{center}
\includegraphics[bb=0 0 217 196, height=1.5in]{fig6}
\includegraphics[bb=0 0 217 196, height=1.5in]{fig7}
\end{center}
\caption{Boards corresponding to removing two horizontal vertex pairs
  from the grid graph. On the left both removed pairs correspond to
  cells on the upper diagonal. On the right, one upper, and one lower
  diagonal pair have been removed. The black cells are used in further
  developing the boards using the calculus of the rook polynomial, in
  the same way as shown in the example in Figure~\ref{develop}.}
\label{gridboard}
\end{figure}

We now focus on generalising to the removal of an arbitrary number
$\tilde h$ of non-coincident (see Footnote~\ref{fn}) horizontal vertex
pairs from the grid graph. In keeping with previous notation, we call
this set of graphs $\widetilde{\cal G}_{k,0,\tilde h}$. We begin by accounting
for the terms which arise from the first step in the development of
the associated boards, i.e. when the black cells are removed from the
boards. For the time being, we exclude the terms which arise when any
rows and columns containing those black cells are also removed. We
introduce the variable $z$, the power of which corresponds to the
number of horizontal vertex pairs removed from the grid graph. When
$\tilde h=0$, we have simply $T$. When $\tilde h=1$, we have $2zyr^2$,
where the factor of 2 arises as the removed horizontal vertex pair can
correspond to a cell on the upper, or the lower diagonal of the
board. As we have seen in the previous paragraph, for $\tilde h=2$, we
have $2z^2(yr^2S+y^2r^2s)=2z^2yr^2(S+ys)$, where again the factor of 2
accounts for swapping the diagonals which the two cells (corresponding
to the removed vertex pairs) are taken from. It follows that the
general term for $\tilde h >0$ is $2yzr^2 \left(z(S+ys)\right)^{\tilde
  h-1}$. Performing a sum over $\tilde h \geq 0$ we obtain the
expression
   \begin{equation}\label{jjghs}
     T + \frac{2yzr^2}{1-z(S+ys)} .
   \end{equation}

We now account for those terms which arise in the development of the
boards whenever the row and column containing a black cell are further
removed. As was explained above, the calculus of the rook polynomial
implies that we gain a factor of $xy$ for each such a removal. In
fact, we gain a factor of $2xyz$; the 2 since the black cell could be
on the upper, or lower diagonal, and the $z$ to account for the
corresponding removal of the vertex pair from the grid graph. Once one
row and column have been removed, the board is split into two
independent boards. If we remove the columns and rows of $q-1$
different black cells, we will have $q$ independent boards; the
corresponding terms are then given by
   \begin{equation}\nonumber
     (2xyz)^{q-1}\left(T + \frac{2yzr^2}{1-z(S+ys)} \right)^q.
   \end{equation}
We can also interpret this expression from the point of view of the
grid graph itself. The factor $(2xyz)^{q-1}$ corresponds to the removal
of $q-1$ horizontal vertex pairs, where the coincident edge below or
above is chosen in the matching. The factor 2 comes from choosing
either the upper or the lower vertex pair to remove. This leaves us to
deal with $q$ separate and independent grids. Each of these $q$ grids
is enumerated by a factor of Equation~(\ref{jjghs}), where $T$
corresponds to the case where no further horizontal vertex pairs are
removed. The fraction, on the other hand, corresponds to the case
where at least one further horizontal vertex pair is removed (but the
coincident edge is not chosen in a matching).
   
   Taking the sum over $q>0$, we obtain
   \begin{equation}\nonumber
     {\cal X} \coloneqq\sum_{k,\tilde h\geq 0}\,y^{k-\tilde h} z^{\tilde h}\
     \left(\sum_{g\in\widetilde {\cal G}_{k,0,\tilde h}} g(x)\right)
     =\frac{T + \frac{2yzr^2}{1-z(S+ys)}}{1-2xyz\left(T +
  \frac{2yzr^2}{1-z(S+ys)}\right)}.
   \end{equation}
   
   It remains to account for the removal of vertical vertex pairs, and
   coincident horizontal vertex pairs, from the grid graph. The removal
   of a single pair of coincident horizontal vertex pairs is
   equivalent to the removal of two neighboring vertical vertex
   pairs. The effect of either of these removals on the board is again to
   break it into two independent boards. Thus we find that the removal of
   coincident horizontal and vertical vertex pairs is accounted for as
   follows:
   \begin{equation}\nonumber
     \sum_{j=0}^\infty (w + z^2)^j {\cal X}^{j+1}
     =  \frac{T + \frac{2yzr^2}{1-z(S+ys)}}{1-(2xyz+z^2+w)
       \left(T + \frac{2yzr^2}{1-z(S+ys)}\right)},
   \end{equation}
   where the coefficient of $w^v$ corresponds to the removal of $v$
   vertical vertex pairs. Simplifying this expression, we obtain
   Equation~(\ref{ThmFirst}).   
\end{proof}

\section{From matching numbers to domino-counting generating functions}

We now use the result of Theorem~\ref{ThmFirst} to compute the number
of configurations with exactly $h$ horizontal, and $v$ vertical
dominoes. Let the $\rho_j(k,v,h)$ be defined\footnote{We use the notation $[y^n]\,f(y)$
  to represent the coefficient of $y^n$ in the Taylor expansion of
  $f(y)$.} as follows:
\begin{equation}\nonumber
\rho_j(k, v, h) \coloneqq [x^j] \mathcal{T}_{k, v, h}(x).
\end{equation}
We remind the reader the combinatorial significance of this
quantity. The set ${\cal G}_{k,v,h}$ of graphs is obtained from
removing $v$ vertical, and $h$ horizontal vertex pairs from the
$2\times k$ grid graph in all possible ways. Each graph $g\in {\cal
  G}_{k,v,h}$ has a number $g_j$ of $j$-edge matchings. We then have
that
\begin{equation}
  \rho_j(k, v, h) = \sum_{g\in{\cal G}_{k,v,h} }g_j.
\end{equation}
As mentioned
in the Introduction, the number $D_{k,h,v}$ of configurations with
exactly $h$ horizontal, and $v$ vertical dominoes is given by
\begin{equation}\label{MyOldThm}
D_{k,v,h}=\sum_{j=0}^n (-1)^j(2n-2j-1)!! \,\rho_j(k,v,h),
\end{equation}
where $n=k-h-v$. We define the corresponding generating function in
the usual way,
\begin{equation}\nonumber
D(y,w,z) \coloneqq \sum_{k,h,v\geq 0} D_{k,v,h}\,y^kw^vz^h.
\end{equation}
We now translate Equation~(\ref{MyOldThm}) into an operation on ${\cal T}(x,y,w,z)$.
\begin{theorem}
  The generating function $D(y,w,z)$ may be obtained using the
  following integral representation:
\begin{equation}\nonumber
 D(y,w,z) = \int_0^\infty dt\,e^{-t}
\frac{1}{2\pi i} \oint_{C_\epsilon} \frac{dx}{x\sqrt{1+2x}}\,
      {\cal T}\left(\frac{x}{t},\frac{-yt}{x},yw,yz\right),
\end{equation}
where the contour integral with respect to $x$ is taken around a small
circle containing the origin.
\end{theorem}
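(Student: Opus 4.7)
The plan is to expand $\mathcal{T}$ as a formal power series in its four variables, substitute the scaled arguments, and show that the contour integral together with the Laplace-type $t$-integral conspire to produce precisely the alternating double-factorial weights of Equation~\eqref{MyOldThm}.

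First I would insert $\mathcal{T}(X,Y,W,Z) = \sum_{k,v,h,j} \rho_j(k,v,h)\, X^j Y^{k-v-h} W^v Z^h$ with $X = x/t$, $Y = -yt/x$, $W = yw$, $Z = yz$. Setting $n = k - v - h$, each term of the resulting series becomes
\[
\rho_j(k,v,h)\,(-1)^{n}\, x^{j-n}\, t^{n-j}\, y^k w^v z^h,
\]
so the desired monomial $y^k w^v z^h$ has already been factored out, and it only remains to show that the two integrations collapse the $x,t$ dependence to the scalar $(-1)^j (2n - 2j - 1)!!$.

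Next I would evaluate the two integrals term by term. The generalized binomial series $(1 + 2x)^{-1/2} = \sum_{m \geq 0} (-1)^m (2m-1)!!\, x^m / m!$ (using the convention $(-1)!! = 1$) gives that the residue at $x = 0$ of $x^{j - n - 1}(1+2x)^{-1/2}$ equals $(-1)^{n-j}(2(n-j)-1)!! / (n-j)!$, which is nonzero only when $0 \leq j \leq n$. The Laplace integral $\int_0^\infty t^{n-j} e^{-t}\, dt = (n - j)!$ then cancels the denominator, and combining with the overall $(-1)^n$ the net multiplier of $\rho_j(k,v,h)\, y^k w^v z^h$ is exactly $(-1)^j (2n-2j-1)!!$. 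Summing over $j$ reconstructs $D_{k,v,h}$ by Equation~\eqref{MyOldThm}, and summing over $k,v,h$ yields $D(y,w,z)$.

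The one potential loose end is the justification of the term-by-term integration, but this is essentially automatic: $\rho_j(k,v,h) = 0$ whenever $j > n$ because a $j$-edge matching needs $2j$ vertices while the graphs in $\mathcal{G}_{k,v,h}$ have only $2n$, so for each monomial in $y,w,z$ the inner sum over $j$ is finite; the contour integral selects a single coefficient in a binomial expansion convergent near $x=0$, and the finitely many resulting $t$-integrals are elementary Gamma integrals. The identity therefore holds rigorously as a formal power series identity in $y, w, z$, which is the natural setting for the statement.
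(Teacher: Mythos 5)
Your proposal is correct and follows essentially the same route as the paper: the paper likewise expands $\mathcal{T}$ coefficient-wise (via $\Omega_{j,n}(w,z)=[x^jy^n]\,\mathcal{T}$), uses $\int_0^\infty t^{n-j}e^{-t}\,dt=(n-j)!$ to cancel the factorial in the binomial expansion of $(1+2x)^{-1/2}$, and absorbs the residual $(-1)^n$ through the sign in $y\to -yt/x$, recovering the weights $(-1)^j(2n-2j-1)!!$ of Equation~(\ref{MyOldThm}). Your closing remark on why term-by-term integration is legitimate (finiteness of the $j$-sum for each monomial) is a small addition the paper leaves implicit.
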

\begin{proof}
We consider the coefficient of $y^n$ in the Taylor expansion of ${\cal T}(x,y,w,z)$,
and define $\Omega_{j,n}(w,z)\coloneqq [x^jy^n]\, {\cal T}(x,y,w,z)$. We have
\begin{equation}\nonumber
[y^n]\, {\cal T}(x,y,w,z)=\sum_{j=0}^n \Omega_{j,n}(w,z)\, x^j =
\sum_{j=0}^n \left(\sum_{h,v\geq 0} \rho_j(n+h+v, v, h) \, w^v z^h \right)x^j
  .
\end{equation}
Under the integration in $t$, the replacements $x\to x t^{-1}$ and $y\to
yt$ dress this result by a factor of $\int_0^\infty dt\, t^{n-j} e^{-t}=(n-j)!$
\begin{equation}\label{thiseq}
  [y^n]\int_0^\infty dt \,e^{-t}\, {\cal T}(xt^{-1},yt,w,z) 
  =\sum_{j=0}^n
  (n-j)!\,\Omega_{j,n}(w,z)\, x^j.
\end{equation}
We now consider the coefficient of $x^n$ in the expression obtained by
multiplying Equation~(\ref{thiseq}) with $(1+2x)^{-1/2}$
\begin{align}\nonumber
  &[x^n] \left( \sum_{q=0}^\infty (-1)^q \frac{(2q-1)!!}{q!} \,x^q \right)
  \left( \sum_{j=0}^n (n-j)!\,\Omega_{j,n}(w,z)\, x^j \right) \\\nonumber
  &= (-1)^n \sum_{j=0}^n (-1)^j (2n-2j-1)!! \, \Omega_{j,n}(w,z).
\end{align}
We may, therefore, compute this quantity by further scaling $y \to
y/x$ and taking the residue at the origin after an overall
multiplication by $x^{-1}$. The factor of $(-1)^n$ is absorbed by a
final replacement $y \to -y$. The variables $w$ and $z$ are also
scaled by $y$, so that the unnatural parameterization discussed in
Footnote~\ref{ftp} is rectified.
\end{proof}

\begin{corollary}
The generating function $D(y,w,z)$  is given by
\begin{align}\nonumber
D(y,&w,z)= \\ \nonumber
 &\int_0^\infty dt\frac{e^{-t}}{(1+(1-w)y -(1-z)^2y^2)
    \sqrt{1-\frac{2 t y (1-(1-z)y)}{(1+(1- z)y) (1+(1-w)y -(1-z)^2y^2)}}}\\
\nonumber
  &=\sum_{j=0}^\infty (2j-1)!!
  \frac{ y^j (1-(1-z)y)^j}{(1+(1-z)y)^j(1+(1-w)y -(1-z)^2y^2)^{j+1}}.
\end{align}
\end{corollary}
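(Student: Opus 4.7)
My plan is to substitute the explicit formula for $\mathcal{T}(x,y,w,z)$ from Theorem~1 into the integral representation of Theorem~2, simplify the resulting rational function in $x$, evaluate the contour integral by residues to obtain the first form, and then expand the integrand as a power series to obtain the second form.

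First I would perform the substitutions $x \mapsto x/t$, $y \mapsto -yt/x$, $w \mapsto yw$, $z \mapsto yz$ inside $\mathcal{T}$. Several cross terms simplify dramatically: the product $xy$ collapses to $-y$, so $x^2y^2 \mapsto y^2$, and the only source of a residual $t/x$ piece in the denominator is the factor $y(1-2xz)$. Setting $u \coloneqq y(1-z)$ and $v \coloneqq y(1-w)$ for brevity, I expect the numerator of $\mathcal{T}$ to become $1+u$, and the denominator to become $(1+u)(1+v-u^2) + yt(1-u)/x$. Clearing the factor of $1/x$ then gives
\[
\mathcal{T}\!\left(\tfrac{x}{t},\,\tfrac{-yt}{x},\,yw,\,yz\right)
= \frac{x(1+u)}{x(1+u)(1+v-u^2) + yt(1-u)}.
\]

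With $\mathcal{T}$ in this form, the full contour integrand becomes
\[
\frac{\mathcal{T}(\ldots)}{x\sqrt{1+2x}} = \frac{1+u}{\sqrt{1+2x}\,\bigl[x(1+u)(1+v-u^2) + yt(1-u)\bigr]},
\]
which has a single simple pole at $x_0 = -yt(1-u)/\bigl((1+u)(1+v-u^2)\bigr)$; for $|y|$ sufficiently small this pole lies inside $C_\epsilon$, while the branch point of $\sqrt{1+2x}$ at $x=-\tfrac12$ remains outside. The residue is $1/\bigl((1+v-u^2)\sqrt{1+2x_0}\bigr)$, and $1+2x_0 = 1 - 2yt(1-u)/\bigl((1+u)(1+v-u^2)\bigr)$; restoring $u$ and $v$ in terms of $y,w,z$ then gives the first line of the corollary. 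For the second line I would use the binomial identity $(1-tB)^{-1/2} = \sum_{j\ge 0} \frac{(2j-1)!!}{j!\,2^j}(tB)^j$ with $B = 2y(1-u)/\bigl((1+u)(1+v-u^2)\bigr)$, and exchange the sum with the $t$-integral; since $\int_0^\infty e^{-t} t^j\,dt = j!$ the factorials cancel, and combining the resulting powers of $B/2 = y(1-u)/\bigl((1+u)(1+v-u^2)\bigr)$ with the prefactor $1/(1+v-u^2)$ reproduces the closed form stated.

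The main obstacle will be the first step: verifying that the $x$-independent part of the denominator factors cleanly as $(1+u)(1+v-u^2)$ after the substitution. The raw expansion produces $1 + 2y - yz - yw - y^2 w(1-z) - y^3(1-z)^3 + y^2 z(1-z)$, and regrouping via identities such as $y^2z(1-z) = yu - u^2$ and $y^3(1-z)^3 = u^3$ reduces this to $1 + u + v + uv - u^2 - u^3 = (1+v)(1+u) - u^2(1+u) = (1+u)(1+v-u^2)$. Once this factorisation is in hand, the residue computation and the binomial expansion are essentially mechanical.
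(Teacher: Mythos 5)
Your proposal is correct and follows essentially the same route as the paper's (much terser) proof: both reduce $\mathcal{T}\bigl(\tfrac{x}{t},\tfrac{-yt}{x},yw,yz\bigr)$ to the form $Ax/(Bx+Ct)$, evaluate the contour integral by the residue at $x=-Ct/B$ inside $(1+2x)^{-1/2}$, and then expand binomially and integrate term by term in $t$. Your algebra, including the factorization of the $x$-independent part of the denominator as $(1+u)(1+v-u^2)$ with $u=y(1-z)$, $v=y(1-w)$, checks out (the only quibble is that the $yt/x$ contribution comes from the term $-(1+2x)y$ as well as from $-y(1-2xz)$, but your combined coefficient $yt(1-u)/x$ is right).
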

\begin{proof}
  We note from Equation~(\ref{ThmFirst}) that ${\cal
    T}\left(\frac{x}{t},\frac{-yt}{x},yw,yz\right)= Ax/(Bx+Ct)$, where
  $A,B$, and $C$ are functions of $y,w$, and $z$. The contour
  integration replaces $x \to -Ct/B$ in the factor
  $(1+2x)^{-1/2}$. The integration over $t$ is interpreted as acting
  on the Taylor expansion of the resulting expression.
\end{proof}
This is not a convergent series\footnote{It may be interpreted as the
  real part (taking $y,w$, and $z\in \mathbb{R}$) of the expansion of
  the following expression, asymptotic in $y^{-1}$:
  $\sqrt{\frac{2(1+(1- z)y)}{y(1-(1- z)y)(1+(1-w) y-(1-z)^2y^2)}}
  F\left( \sqrt{\frac{(1+(1- z)y)(1+(1-w)
      y-(1-z)^2y^2)}{2y(1-(1-z)y)}}\right)$, where $F$ is Dawson's
  integral; Nijimbere \cite{VN} gives a modern account of the
  asymptotic expansion of this function and its relatives.}, and hence
we cannot benefit from an analytic generating function, with which
questions about asymptotic behavior could easily be answered. In order
to convert this formal generating function into a convergent series,
we could take an inverse Laplace transform in $y$ to form an
exponential generating function
\begin{equation}\nonumber
E(y,w,z) \coloneqq {\cal L}^{-1}\left\{ y^{-1} D(y^{-1},w,z) \right\}.
\end{equation}
Performing this transform is not straightforward, however in the
simple case of counting only vertical dominoes, it is feasible, and
yields a well-known result \seqnum{A055140},
\begin{equation}\label{byvonly}
D(y,w,1) = \sum_{j=0}^\infty \frac{(2j-1)!!\,y^j}{(1+(1-w)y)^{j+1}}
\to E(y,w,1)=
\frac{e^{y(w-1)}}{\sqrt{1-2y}},
\end{equation}
which counts the number of matchings of $2k$ people with partners (of
either sex) such that exactly $v$ couples are left
together. Unfortunately, we have been unable to perform the transform
for the case of counting only horizontal dominoes \seqnum{A325754}
\begin{equation}\nonumber
 D(y,1,z) =  \frac{1}{\left(1-(1-z)y\right)}
\sum_{j=0}^\infty   \frac{(2j-1)!! \,y^j}{\left(1+(1-z)y\right)^{2j+1}},
\end{equation}
but in the next section we derive the exponential generating function
by appealing to known results for the $1\times 2k$ problem.

It is also interesting to consider the case where vertical and
horizontal dominoes are not distinguished, i.e., $D(y,z,z)$. The
present author \cite[Section 4]{DY} considered this sequence previously. We can
now provide a generating function for these numbers \seqnum{A325753}
\begin{equation}\nonumber
  D(y,z,z) = 
  \sum_{j=0}^\infty
  \frac{(2j-1)!!\,y^j\,(1-(1-z)y)^j}
       {(1+(1-z)y)^j\left(1+(1-z)y-(1-z)^2y^2\right)^{j+1}}.
\end{equation}
The present author \cite[Section 4.2]{DY} also made several
conjectures\footnote{A. Howroyd \cite{AH} has proven several of
  these.} for generating functions for the so-called $(k-\ell)$-domino
configurations, when the number of dominoes is $\ell$ less than the
maximum value $k$. These can now be readily calculated as follows:
\begin{equation}\nonumber
  {\cal F}_{\ell}(y) \coloneqq \sum_{k\geq 0} \Biggl(\sum_{\substack{h,v\geq 0\\h+v=k-\ell}}
  D_{k,v,h}\Biggr)\, y^k
  =[z^{-\ell}] \lim_{z\to\infty} D\left(\frac{y}{z},z,z\right).
\end{equation}
The first few such generating functions are
\begin{align}\nonumber
  {\cal F}_0 = \frac{1}{1-y-y^2},\qquad {\cal F}_1 =
  \frac{2y^3}{(1-y)(1-y-y^2)^2},\qquad {\cal F}_2 =
  \frac{y^2(1+3y+6y^2+y^3+3y^4)}{(1-y)^2(1-y-y^2)^3}.
\end{align}
The function ${\cal F}_0$ is the generating function for the Fibonacci
numbers, giving the number of domino tilings. The function ${\cal
  F}_1$ is that for the path length of the Fibonacci tree of order
$k$, \seqnum{A178523}. The sequences corresponding to the cases
$\ell=2,\dots,5$ appear as \seqnum{A318267} to \seqnum{A318270},
respectively; before the results presented here were available these
were obtained by using data produced by a computer program to fix the
coefficients in the numerators of the generating functions, based on a
guess for the pattern of the denominators. Hence only a small range of
values of $\ell$ were achieved.

\section{Connections to linear chord diagrams}

Kreweras and Poupard \cite{KP} solved the problem of counting the
$h$-domino configurations on the $1\times 2k$ grid graph (i.e., a path
of length $2k$). Cameron and Killpatrick \cite{CK} recently revisited
this case in the context of linear chord diagrams, and provided a
derivation of the corresponding exponential generating function. Let
$L_{k,h}$ be the number of $h$-domino configurations on the $1\times
2k$ grid graph. We seek to establish a correspondence with the number
$D_{k,h}$ of $h$ horizontal domino configurations on the $2\times k$
grid graph, where we allow any number of vertical dominoes.
\begin{theorem}\label{LtoD}
The numbers $L_{k,h}$ and $D_{k,h}$ are related by the following recursion relation:
  \begin{equation}\nonumber
D_{k,h} = D_{k-1,h} + L_{k,h} - D_{k-1,h-1}.
    \end{equation}
\end{theorem}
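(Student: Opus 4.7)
The plan is to establish the recursion by unfolding each $2\times k$ configuration into a linear chord diagram on $2k$ points, as illustrated in Figure~\ref{fourways}. Label the bottom-row cells $1,2,\ldots,k$ from left to right and the top-row cells $2k,2k-1,\ldots,k+1$ from left to right, so that reading the perimeter of the grid starting at the Westernmost point gives the linear order $1,2,\ldots,2k$. Every configuration (a perfect matching of the $2k$ cells) then corresponds bijectively to a linear chord diagram on these $2k$ points, with pairs of cards becoming chords.

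Under this bijection, the chord joining two adjacent positions $i$ and $i+1$ is one of precisely three types: a horizontal domino in the bottom row (when $1\le i\le k-1$), the vertical domino at column $k$ (when $i=k$), or a horizontal domino in the top row (when $k+1\le i\le 2k-1$). This follows from the arithmetic observation that the vertical domino at column $j$ joins linear positions $j$ and $2k+1-j$, which are adjacent only when $j=k$. Let $D_{k,h}^V$ and $D_{k,h}^N$ denote the numbers of $2\times k$ configurations with exactly $h$ horizontal dominoes that respectively do and do not contain a vertical domino in column $k$, so that $D_{k,h}=D_{k,h}^V+D_{k,h}^N$. Counting linear chord diagrams by their number of short chords, the bijection yields
\[
L_{k,h} \;=\; D_{k,h}^N + D_{k,h-1}^V.
\]

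To conclude, I would note that deleting the rightmost column of a configuration counted by $D_{k,h}^V$ produces a $2\times(k-1)$ configuration with the same number $h$ of horizontal dominoes, since no horizontal domino can bridge columns $k-1$ and $k$ when column $k$ already contains a vertical domino; this operation is a bijection, so $D_{k,h}^V = D_{k-1,h}$ and in particular $D_{k,h-1}^V = D_{k-1,h-1}$. Substituting this together with $D_{k,h}^N = D_{k,h} - D_{k-1,h}$ into the displayed identity and rearranging gives the claimed recursion. The main obstacle is verifying the identification of short chords with the three listed possibilities; once this classification is in place, the remaining steps are elementary bookkeeping.
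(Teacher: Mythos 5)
Your proposal is correct and follows essentially the same route as the paper: unfold the $2\times k$ grid to a linear order, observe that the only short chord not corresponding to a horizontal domino is the central (fold) pair, and split both $L_{k,h}$ and $D_{k,h}$ according to whether that pair is matched, identifying the matched case with a $2\times(k-1)$ configuration. Your version just makes the bookkeeping explicit via $D_{k,h}^V$ and $D_{k,h}^N$, which is a fine formalization of the paper's argument.
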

\begin{proof}
We begin by unfolding the the vertices of the $2\times k$ grid graph,
as shown in Figure~\ref{unfold}, to give the vertices of the $1\times
2k$ grid graph. We then note that the central pair of vertices does
not correspond to a horizontal domino in the $2\times k$ graph, but
rather to a vertical one. The configurations counted by $L_{k,h}$
may be divided into two sets: those with a domino on the central pair
and those without. Those configurations with a domino on the central
pair are counted by $D_{k-1,h-1}$, as the central pair is effectively
deleted, leaving the $2\times (k-1)$ grid graph with $h-1$ horizontal
dominoes. In Figure~\ref{DfromL}, we provide a pictorial interpretation
of this relation. The configurations counted by $D_{k,h}$ can be
similarly divided, but since the central pair this time represents a
vertical domino, those configurations with this vertical domino are
equal in number to $D_{k-1,h}$.
\end{proof}
\begin{figure}[t]
\begin{center}
\includegraphics[bb=0 0 348 70, height=0.8in]{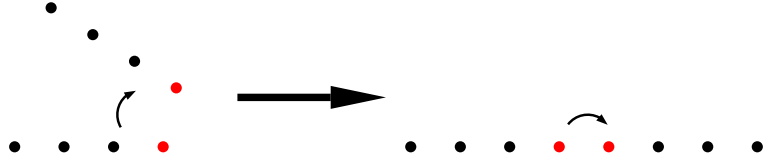}
\caption{The $2\times k$ grid graph is unfolded to produce the
  $1\times 2k$ grid graph. The vertices marked in red comprise a
  vertical domino, which becomes horizontal upon unfolding.}
\label{unfold}
\end{center}
\end{figure}

\begin{figure}[h]
\begin{center}
\includegraphics[bb=0 0 437 84, height=0.9in]{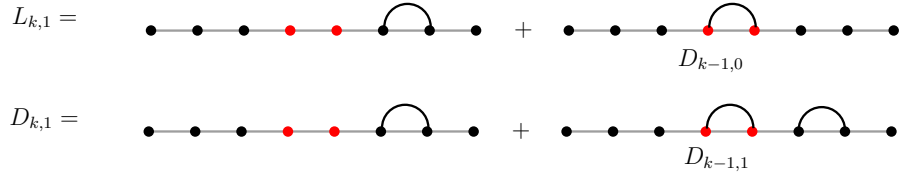}
\caption{The relationship between the numbers, $L_{k,h}$ and $D_{k,h}$,
  of $h$-horizontal-domino configurations on the $1\times 2k$, and
  $2\times k$ grid graphs, respectively, is shown for the case $h=1$.
}
\label{DfromL}
\end{center}
\end{figure}

Jovovic \cite{JV}, and Cameron and Killpatrick \cite{CK}, provide the
exponential generating function
\begin{equation}\nonumber
  L(y,z) \coloneqq
  \sum_{k,h\geq 0} L_{k,h} \frac{y^k}{k!}\, z^h
  = \frac{e^{(\sqrt{1-2 y}\,-\,1) (1-z)}}{\sqrt{1-2 y}}.
\end{equation}
Translating the recursion relation from Theorem~\ref{LtoD} into a
differential equation for the exponential generating function, we obtain
\begin{equation}\label{ODE}
  \frac{\partial E(y,1,z)}{\partial y} - (1-z) E(y,1,z) =
  \frac{\partial L(y,z)}{\partial y}.
\end{equation}
This elementary, non-homogeneous, first-order ODE may be solved using an
integrating factor. 
\begin{corollary}
  The exponential generating function for $D_{k,h}$ is as follows:
\begin{align}\label{byhonly}
E(y,1,z)& =  \frac{e^{(\sqrt{1-2 y}\,-\,1) (1-z)}}{\sqrt{1-2 y}}\\\nonumber
  &-e^{(y-2) (1-z)} \sqrt{\frac{\pi}{2}} \sqrt{1-z}
  \left(
\erfi
\left( \frac{(\sqrt{1-2 y}+1) \sqrt{1-z}}{\sqrt{2}} \right)
-\erfi (\sqrt{2} \sqrt{1-z})
  \right),
\end{align}
where we have expressed the result in terms of the imaginary error
function $\erfi$.
\end{corollary}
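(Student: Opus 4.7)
The plan is to solve the linear first-order ODE (\ref{ODE}) for $E(y,1,z)$ directly using an integrating factor, then reduce the resulting integral to a difference of $\erfi$ values via an explicit substitution.

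First I would rewrite (\ref{ODE}) as
\begin{equation}\nonumber
\frac{\partial}{\partial y}\Bigl(e^{-(1-z)y}E(y,1,z)\Bigr) = e^{-(1-z)y}\,\frac{\partial L(y,z)}{\partial y},
\end{equation}
and integrate from $0$ to $y$ using the initial condition $E(0,1,z)=L(0,z)=1$. Integrating the right-hand side by parts (with $u=e^{-(1-z)s}$, $dv=L'(s,z)\,ds$) and solving for $E$ gives
\begin{equation}\nonumber
E(y,1,z) = L(y,z) + (1-z)\,e^{(1-z)y}\int_0^{y} e^{-(1-z)s} L(s,z)\,ds.
\end{equation}
All of the difficulty of the proof is now concentrated in evaluating this integral in closed form.

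Next I would make the substitution $u=\sqrt{1-2s}$, so that $ds=-u\,du$ and the exponent in the integrand becomes $-(1-z)(1-u^2)/2 + (u-1)(1-z)$. Completing the square, this exponent equals $(1-z)(u+1)^2/2 - 2(1-z)$, and the factor of $1/\sqrt{1-2s}=1/u$ cancels the Jacobian $-u$. Thus the integral reduces to
\begin{equation}\nonumber
-e^{-2(1-z)}\int_{1}^{\sqrt{1-2y}} e^{(1-z)(u+1)^2/2}\,du.
\end{equation}
A second substitution $v=(u+1)\sqrt{(1-z)/2}$ converts this to a $\int e^{v^2}\,dv$, which by the definition $\erfi(x)=\frac{2}{\sqrt{\pi}}\int_0^x e^{t^2}\,dt$ evaluates as a difference of $\erfi$ values with arguments $\sqrt{2(1-z)}$ and $(\sqrt{1-2y}+1)\sqrt{(1-z)/2}$.

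Finally I would reassemble all the factors: the $(1-z)e^{(1-z)y}$ outside combines with the $e^{-2(1-z)}$ and $\sqrt{2/(1-z)}$ produced by the substitution to give the prefactor $\sqrt{\pi/2}\sqrt{1-z}\,e^{(y-2)(1-z)}$ claimed in (\ref{byhonly}), with the overall minus sign explaining the second line of (\ref{byhonly}). The main obstacle I anticipate is purely bookkeeping --- correctly tracking the sign flips from $ds=-u\,du$ and from the integration-by-parts step, and combining the various $\sqrt{1-z}$ and exponential factors so that the $-2(1-z)$ from completing the square and the $(1-z)y$ from the integrating factor merge into $(y-2)(1-z)$. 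No convergence or analyticity issue arises because the ODE is linear of first order and all manipulations are valid for $y$ in a neighborhood of the origin where the Taylor coefficients $E_{k,0,h}/k!$ are well defined.
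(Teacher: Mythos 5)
Your proposal is correct and follows exactly the route the paper intends: the paper's proof is the single sentence ``the method of an integrating factor may be used to solve Equation~(\ref{ODE}),'' and your integrating factor, integration by parts, and the substitutions $u=\sqrt{1-2s}$ followed by $v=(u+1)\sqrt{(1-z)/2}$ carry that out in full, with all signs and prefactors ($\frac{\sqrt{2\pi}}{2}\sqrt{1-z}=\sqrt{\pi/2}\,\sqrt{1-z}$, and $e^{(1-z)y}e^{-2(1-z)}=e^{(y-2)(1-z)}$) checking out against the stated formula. Your version simply supplies the details the paper omits.
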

\begin{proof}
The method of an integrating factor may be used to solve Equation~(\ref{ODE}).  
\end{proof}

\section{Asymptotic growth and distributions}

The asymptotics of the exponential generating functions $E(y,w,1)$ and
$E(y,1,z)$, given in Equations~(\ref{byvonly}) and (\ref{byhonly}),
respectively, can be analyzed using the usual machinery of analytic
combinatorics. Let ${\cal V}_k$ be the random variable defined as the
number of vertical dominoes in a random configuration on the $2\times
k$ grid. Similarly, let ${\cal H}_k$ be the analogous random variable
counting horizontal dominoes. The probability distribution functions
$V_{k,v}\coloneqq P({\cal V}_k=v)$ and $H_{k,h}\coloneqq P({\cal
  H}_k=h)$ are computed as follows:
\begin{equation}\nonumber
  V_{k,v} = \frac{1}{(2k-1)!!}\sum_{h\geq 0} D_{k,v,h},\qquad
  H_{k,h} = \frac{1}{(2k-1)!!}\sum_{v\geq 0} D_{k,v,h}.
\end{equation}
Taking derivatives of $E(y,w,1)$ by $w$, we can compute
the factorial moments of $V_{k,v}$. We note that
\begin{equation}\nonumber
  \left[y^k\right] \frac{\partial^m E(y,w,1)}{\partial w^m}\biggr|_{w=1} =
  \left[y^k\right]\frac{y^m}{\sqrt{1-2y}}
  \sim \left(\frac{1}{2}\right)^m \frac{(2k-1)!!}{k!},
\end{equation}
where $\sim$ indicates asymptotic growth in $k$, and so the
$m^{\text{th}}$ factorial moment of $V_{k,v}$ is asymptotically
$(1/2)^m$, consistent with a Poisson distribution of mean $1/2$. A
similar argument can be made for $H_{k,h}$, where the corresponding
mean is found to be $1$. Indeed, Kreweras and Poupard \cite{KP} (see
also Cameron and Killpatrick \cite{CK}) proved that the asymptotic
factorial moments for the distribution of dominoes on the $1\times 2k$
grid graph are all equal to one; the case of horizontal dominoes on
the $2\times k$ grid graph must have the same asymptotic behavior,
since the matchings differ only at a single site: the vertices shown
in red in Figure~\ref{unfold}.

We expect, in the $k\to\infty$ limit, the occurrences of vertical and
horizontal dominoes to be independent, and so the distribution
$P_{k,p}\coloneqq P({\cal H}_k+{\cal V}_k=p)$ of dominoes (vertical or
horizontal) should also be Poisson with mean $1/2 + 1 = 3/2$. We
calculate $P_{k,p}$ as follows:
\begin{equation}\nonumber
P_{k,p} = \frac{1}{(2k-1)!!}\sum_{\substack{v,h\geq 0\\v+h=p}} D_{k,v,h}.
\end{equation}
The present author \cite[Section 2, Theorem 1]{DY} proved that the mean of this
distribution, exact in $k$, is given by
\begin{equation}\nonumber
  \sum_{p\geq 0} p\, P_{k,p} = \frac{3k-2}{2k-1},
\end{equation}
which gives the expected result of $3/2$ in the $k\to \infty$
limit. We do not have an expression for the exponential generating
function for the $D_{k,v,h}$, and so cannot follow the same line of
reasoning used above to establish the equality of the remaining
factorial moments. A different strategy\footnote{The author thanks
  Stephan Wagner for providing this proof.} may be used,
however, to prove that, indeed,
\begin{theorem}\label{threehalves}
\begin{equation}\nonumber
  \lim_{k\to\infty}
  P_{k,p}
  \simeq \frac{e^{-3/2}}{p!}
  \left(\frac{3}{2}\right)^p.
\end{equation}
\end{theorem}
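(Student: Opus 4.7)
The plan is to apply the method of factorial moments. Since the Poisson distribution is uniquely determined by its moments and Poisson$(\lambda)$ has $m$-th factorial moment $\lambda^m$, it suffices to prove
$$\lim_{k\to\infty}\mathbb{E}[(X_k)_m] = (3/2)^m \quad \text{for every fixed } m\geq 0,$$
where $X_k := {\cal H}_k + {\cal V}_k$. Once this is established, the moment convergence theorem upgrades it to convergence in distribution of $X_k$ to Poisson$(3/2)$, which is exactly the statement of the theorem.

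The first step is to express the factorial moment as a sum over matchings. For each edge $e$ of the $2\times k$ grid graph let $I_e$ be the indicator that $e$ is covered by a domino, so that $X_k = \sum_e I_e$ and
$$(X_k)_m = \sum_{(e_1,\ldots,e_m)} I_{e_1}\cdots I_{e_m},$$
where the sum runs over ordered $m$-tuples of distinct edges. The product vanishes unless the edges form an $m$-matching. Under the uniform distribution on the $(2k-1)!!$ pairings of the $2k$ cells, the probability that $m$ specified disjoint edges are all dominoes equals $(2k-2m-1)!!/(2k-1)!!$, namely the number of pairings of the remaining $2k-2m$ cells divided by the total. Hence
$$\mathbb{E}[(X_k)_m] = m!\,\rho_m^{(k)}\,\frac{(2k-2m-1)!!}{(2k-1)!!},$$
where $\rho_m^{(k)} = [x^m]T_k(x)$ is the number of $m$-matchings of the $2\times k$ grid graph.

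The second, and main, step is to prove $\rho_m^{(k)} \sim (3k)^m/m!$ as $k\to\infty$ for each fixed $m$. The cleanest route is combinatorial: the grid has $3k-2$ edges, contributing $\binom{3k-2}{m}\sim (3k)^m/m!$ edge $m$-subsets; the non-matchings among these must contain at least one pair of adjacent edges, and since each edge has a bounded number of neighbors the count of such bad subsets is $O(k^{m-1})$. Alternatively, singularity analysis of Riordan's generating function $T(x,y)$ works: implicit differentiation at $(x,y)=(0,1)$ yields $y^*(x) = 1 - 3x + O(x^2)$ for the smallest root of the denominator, so $T_k(x)\sim C(x)\,y^*(x)^{-k}$ with $C(0)=1$, and extraction of $[x^m]$ gives the same leading term. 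Combined with $(2k-2m-1)!!/(2k-1)!! = \prod_{j=0}^{m-1}(2k-1-2j)^{-1} \sim (2k)^{-m}$, we conclude
$$\mathbb{E}[(X_k)_m] \sim m!\cdot \frac{(3k)^m}{m!}\cdot \frac{1}{(2k)^m} = \left(\frac{3}{2}\right)^m,$$
as required. The principal obstacle is simply the asymptotic $\rho_m^{(k)} \sim (3k)^m/m!$; the combinatorial bound above handles each fixed $m$, which is all the method of moments demands, and one need not worry about uniformity in $m$ growing with $k$ to obtain the stated pointwise Poisson limit.
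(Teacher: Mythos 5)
Your proof is correct, but it takes a genuinely different route from the paper. The paper's proof (due to Stephan Wagner) starts from the explicit generating function $D(y,z,z)$ derived earlier in the paper, writes the probability generating function $P_k(z)$ as a sum over $\ell$ of terms $\frac{(2k-2\ell-1)!!}{(2k-1)!!}a_{k-\ell,\ell}(1-z)^\ell$, sandwiches the coefficients $a_{k-\ell,\ell}$ between $3^\ell\binom{k-\ell}{\ell}$ and $3^\ell\binom{k}{\ell}$ by coefficient-wise inequalities on rational functions, and concludes $P_k(z)\to e^{3(z-1)/2}$ by dominated convergence. You instead compute the factorial moments of ${\cal H}_k+{\cal V}_k$ directly and combinatorially: the identity $\mathbb{E}[(X_k)_m]=m!\,\rho_m^{(k)}\,(2k-2m-1)!!/(2k-1)!!$ is exact, the asymptotic $\rho_m^{(k)}\sim(3k)^m/m!$ follows from the edge count $3k-2$ plus an $O(k^{m-1})$ bound on non-matchings, and the method of moments (valid since Poisson is moment-determined) finishes the job. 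The two arguments are secretly close --- the coefficient of $(z-1)^\ell$ in the paper's $P_k(z)$ is exactly the $\ell$-th factorial moment divided by $\ell!$, so both proofs reduce to showing the factorial moments tend to $(3/2)^\ell$ --- but yours buys self-containedness and transparency: it needs none of the generating-function machinery of the earlier sections, and it makes visible \emph{why} the answer is $3/2$ (three edges per site over two cells per domino). The paper's version buys uniform control of the full probability generating function via explicit two-sided coefficient bounds, which is arguably more robust if one wanted rates of convergence. Your closing remark that uniformity in $m$ is not needed is right: the method of moments only requires each fixed moment to converge.
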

\begin{proof}
  We begin by noting that $P_{k,p}$ is (up to the denominator $(2k-1)!!$) the
  coefficient of $y^kz^p$ in $D(y,z,z)$, i.e.
  \begin{align}\nonumber
    P_{k,p} = &[y^kz^p] 
    \sum_{j=0}^\infty
    \frac{(2j-1)!!}{(2k-1)!!}
  \frac{y^j\,(1-(1-z)y)^j}
       {(1+(1-z)y)^j\left(1+(1-z)y-(1-z)^2y^2\right)^{j+1}}\\\nonumber
       =&\sum_{j=0}^k
    \frac{(2j-1)!!}{(2k-1)!!}[y^{k-j}z^p]
  \frac{(1-(1-z)y)^j}
       {(1+(1-z)y)^j\left(1+(1-z)y-(1-z)^2y^2\right)^{j+1}}.
    \end{align}
  We now define the coefficients $a_{j,n}$ as follows:
  \begin{equation}\nonumber
    a_{j,n}  \coloneqq [x^n]\frac{(1-x)^j}{(1+x)^j(1+x-x^2)^{j+1}}.
    \end{equation}
  We then have that
  \begin{equation}\nonumber
    P_{k,p} = \sum_{j=0}^k  \frac{(2j-1)!!}{(2k-1)!!}[y^{k-j}z^p]
    \sum_{n\geq 0} a_{j,n} (1-z)^n y^n
    = \sum_{j=0}^k  \frac{(2j-1)!!}{(2k-1)!!}[z^p]
     a_{j,k-j} (1-z)^{k-j} .
  \end{equation}
  Let $P_k(z)$ be defined as follows:
  \begin{equation}\nonumber
    P_k(z) \coloneqq \sum_{p\geq 0} P_{k,p} z^p = \sum_{j=0}^k
    \frac{(2j-1)!!}{(2k-1)!!}a_{j,k-j} (1-z)^{k-j} =
   \sum_{\ell=0}^k
    \frac{(2k-2\ell-1)!!}{(2k-1)!!}a_{k-\ell,\ell} (1-z)^{\ell}.
  \end{equation}
  
  We now proceed to prove
  \begin{equation}\nonumber
    \lim_{k\to \infty} P_k(z) = e^{3(z-1)/2},
  \end{equation}
  which is equivalent to the statement of the theorem. We accomplish
  this by placing bounds on the $a_{k-\ell,\ell}$. Note that
  \begin{align}\nonumber
    a_{k-\ell,\ell} &= [x^\ell] \frac{1}{1+x-x^2}\left(
    \frac{1-x}{(1+x)(1+x-x^2)}\right)^{k-\ell}\\\nonumber
&    =(-1)^\ell [x^\ell]
    \frac{1}{1-x-x^2}\left( \frac{1+x}{(1-x)(1-x-x^2)}\right)^{k-\ell}.
  \end{align}
  The following inequalities hold coefficient-by-coefficient:
  \begin{equation}\nonumber
    1 \leq \frac{1}{1-x-x^2} \leq \frac{1}{1-3x},
\quad\text{and}\quad
    1+3x \leq \frac{1+x}{(1-x)(1-x-x^2)} \leq \frac{1}{1-3x}.
  \end{equation}
  It then follows that
  \begin{equation}\nonumber
    [x^\ell](1+3x)^{k-\ell} \leq (-1)^\ell a_{k-\ell,\ell} \leq [x^\ell] (1-3x)^{-(k-\ell+1)},
  \end{equation}
  or, equivalently
  \begin{equation}\nonumber
    3^\ell {k-\ell \choose \ell}  \leq (-1)^\ell a_{k-\ell,\ell} \leq 3^\ell {k \choose \ell}  .
  \end{equation}
 In the $k\to\infty$ limit these bounds become equal. In order to
 establish the form of $P_k(z)$ in this limit, we consider
 \begin{equation}\nonumber
     \frac{(2k-2\ell-1)!!}{(2k-1)!!} 3^\ell {k-\ell \choose \ell} \leq
     \frac{(2k-2\ell-1)!!}{(2k-1)!!} (-1)^\ell a_{k-\ell,\ell} \leq
     \frac{(2k-2\ell-1)!!}{(2k-1)!!} 3^\ell {k \choose \ell} ,
  \end{equation}
 and note that the bounds in this inequality are equal to $(3/2)^\ell/\ell!$
 in the limit, thus
 \begin{equation}\nonumber
   \lim_{k\to\infty} \frac{(2k-2\ell-1)!!}{(2k-1)!!} (-1)^\ell a_{k-\ell,\ell} =
   \frac{3^\ell}{2^\ell \ell!}.
 \end{equation}
It remains to show that (the limit of) the sum in the definition of
$P_k(z)$ is convergent. For this we note that 
\begin{align}\nonumber
  \left| \frac{(2k-2\ell-1)!!}{(2k-1)!!} (-1)^\ell a_{k-\ell,\ell} (z-1)^\ell \right|
  &\leq \frac{(2k-2\ell-1)!!}{(2k-1)!!} 3^\ell {k \choose \ell}
  \left|z-1\right|^\ell\\\nonumber
  &=\frac{3^\ell|z-1|^\ell}{\ell!}\prod_{j=0}^{\ell-1}
  \frac{k-j}{2k-2j-1}\leq\frac{3^\ell|z-1|^\ell}{\ell!},
\end{align}
and, further,
\begin{equation}\nonumber
 \sum_{\ell\geq 0}  \frac{3^\ell|z-1|^\ell}{\ell!} = e^{3|z-1|}
\end{equation}
is a convergent series; by dominated convergence it follows that 
\begin{align}\nonumber
\lim_{k\to\infty} P_k(z) = \sum_{\ell\geq 0} \lim_{k\to\infty}
\frac{(2k-2\ell-1)!!}{(2k-1)!!} (-1)^\ell a_{k-\ell,\ell} (1-z)^\ell = \sum_{\ell\geq 0}
\frac{3^\ell}{2^\ell \ell!} (z-1)^\ell = e^{3(z-1)/2}.
  \end{align}

\end{proof}

\section{Acknowledgements}
The author would like to thank the anonymous referee for a very
careful reading of the manuscript and for suggesting several
improvements. He also thanks Stephan Wagner for providing the proof to
Theorem~\ref{threehalves}.

\bigskip
\hrule
\bigskip

\noindent {\it 2010 Mathematics Subject Classification:} 
Primary 05A15; 
Secondary 05C70, 60C05. 

\noindent \emph{Keywords:} 
Fibonacci number, Fibonacci tree, Domino
  tiling, Perfect matching, Chord diagram, Brauer diagram
\bigskip
\hrule
\bigskip

\noindent (Concerned with sequences \seqnum{A000045},
\seqnum{A046741}, \seqnum{A055140}, \seqnum{A079267},
\seqnum{A178523}, \seqnum{A265167}, \seqnum{A318243},
\seqnum{A318244}, \seqnum{A318267}, \seqnum{A318268},
\seqnum{A318269}, \seqnum{A318270}, \seqnum{A325753},
and \seqnum{A325754})

\bigskip
\hrule
\bigskip

\vspace*{+.1in}
\noindent

\bigskip
\hrule
\bigskip

%\noindent
%Return to
%\htmladdnormallink{Journal of Integer Sequences home page}{http://www.cs.uwater%loo.ca/journals/JIS/}.
%\vskip .1in

\end{document}